\documentclass[12pt]{article}
\usepackage{color}
\usepackage{amsmath}
\usepackage{amsfonts,amssymb}
\usepackage{extarrows}
\usepackage{geometry}
\usepackage{authblk} 
\usepackage{hyperref} 
\usepackage{mathrsfs}
\usepackage{multirow}
\usepackage{diagbox}
\usepackage{threeparttable}
\usepackage[utf8]{inputenc}
\def\0{\emptyset}

\newtheorem{theorem}{Theorem}[section]

\newtheorem{lemma}[theorem]{Lemma}
\newtheorem{claim}[theorem]{Claim}

\newenvironment{proof}{{\noindent\it Proof.}}{\hfill $\square$\par}

\newcommand{\ex}{\mathrm{ex}}
\newcommand{\ind}{\mathrm{ind}}

\usepackage{graphicx}

\usepackage{mathtools} 

\usepackage{enumerate}
\usepackage{enumitem}

\usepackage{
	amsmath,			
	amssymb,			
	enumerate,		    
	graphicx,			
	lastpage,			
	multicol,			
	multirow,			
	pifont,			    
}

\usepackage[numbers]{natbib}

\begin{document}


\title{The number of induced paths in outerplanar graphs}

\author[1]{\small\bf Yichen Wang\thanks{E-mail: wangyich22@mails.tsinghua.edu.cn}}
\author[2]{\small\bf Ervin Gy\H{o}ri\thanks{E-mail:  gyori.ervin@renyi.hu}}
\author[2]{\small\bf Casey Tompkins\thanks{E-mail: tompkins.casey@renyi.hu}}
\author[1]{\small\bf Xiamiao Zhao\thanks{Corresponding author: E-mail: zxm23@mails.tsinghua.edu.cn}}

\affil[1]{\small Department of Mathematical Sciences, Tsinghua University, Beijing, P.R. China.}
\affil[2]{\small HUN-REN Alfr\'ed R\'enyi Institute of Mathematics, Budapest, Hungary.}


\date{}

\maketitle\baselineskip 16.3pt

\begin{abstract}
    Let $P_k$ denote the path with $k$ vertices, and $\mathrm{ex}_{\mathcal{OP}}(n,H^{\mathrm{ind}},\emptyset)$ be the maximum number of induced copies of $H$ in an $n$-vertex outerplanar graph.
    In this paper, we determine the exact value of $\mathrm{ex}_{\mathcal{OP}}(n,P_3^{\mathrm{ind}},\emptyset)$ for all $n$, and give an asymptotic value of $\mathrm{ex}_{\mathcal{OP}}(n,P_4^{\mathrm{ind}},\emptyset)$.
    For general $k$, Matolcsi and Nagy proved that $\lim_{k\to \infty} {\left( \ex_{\mathcal{OP}}(n, P_{k+1},\emptyset)\right)^{1/k}} =4$.
    In the induced case, we prove that 
    \[
        fib(k-1)\frac{{(n-2k+3)}^2}{4} \le \ex_{\mathcal{OP}}(n, P_{k+1}^{\mathrm{ind}},\emptyset) \le fib(k+1) \binom{n}{2},
    \]
    where $fib(k)$ is the Fibonacci number. 
    This implies that $\lim_{k\to \infty} {\left( \ex_{\mathcal{OP}}(n, P_{k+1}^{\mathrm{ind}},\emptyset)\right)^{1/k}} = \frac{\sqrt{5}+1}{2}\approx 1.618$.
    


    
\end{abstract}




\section{Introduction}\label{sec: intro}
This paper is about the Tur\'an problem on planar graphs. 
More specifically, we focus on a special class of planar graphs, say outerplanar graphs.
An outerplanar graph is a graph that has a planar drawing for which all vertices belong to the outer face of the drawing.
If the outer face is a cycle, we call it the outer cycle.
In this paper, let $P_k$ denote the path with $k$ vertices, and $C_k$ denote the cycle with $k$ vertices.
Chartrand and Harary~\cite{chartrand1967planar} introduced the concept of outerplanar graphs, and they proved that a graph is outerplanar if and only if it does not contain $K_4$ or $K_{2,3}$ as a minor.

The generalized Tur\'an problem is to determine the maximum number of copies of a graph $H$ in an $n$-vertex graph that does not contain any graph $F\in \mathcal{F}$ as a subgraph, which is denoted by $\ex(n,H,\mathcal{F})$, where $\mathcal{F}$ is a family of graphs.
If we restrict the host graph to be planar, then we get the planar Tur\'an number $\ex_{\mathcal{P}}(n,H,\mathcal{F})$.
And if we restrict the host graph to be outerplanar, then we get the outerplanar Tur\'an number $\ex_{\mathcal{OP}}(n,H,\mathcal{F})$.

The generalized planar Tur\'an problems were systematically investigated by Gy\H{o}ri, Paulos, Salia, Tompkins and Zamora~\cite{gyHori2020generalized}.
Ghosh, Gy\H{o}ri, Martin, Paulos, Salia, Xiao and Zamora~\cite{GHOSH2021112317} determined $\ex_{\mathcal{P}}(n, P_5, \emptyset)$.
Grzesik, Gy\H{o}ri, Paulos, Salia, Tompkins and Zamora~\cite{Grzesik2022} determined $\ex_{\mathcal{P}}(n, P_4, \emptyset)$.
Lv, Gy\H{o}ri, He, Salia, Tompkins and Zhu~\cite{LV202415} determined $\ex_{\mathcal{P}}(n, C_{2k}, \emptyset)$ for all $k \ge 2$ asymptotically.
Gy\H{o}ri, Paulos, Salia, Tompkins and Zamora~\cite{gyHori2019maximum} determined $\ex_{\mathcal{P}}(n, C_5, \emptyset)$.
Gy\H{o}ri and Hama Karim~\cite{gyHori2024generalized} determined $\ex_{\mathcal{P}}(n, C_\ell, C_3)$ for $\ell\in\{4,5,6\}$ and $\ex_{\mathcal{P}}(n, C_3, C_\ell)$ for $\ell\in\{4,5,6\}$.

There are many results when counting the maximum number of induced copies of $H$ in an $n$-vertex, $\mathcal{F}$-free planar graph, which is denoted by $\ex(n,H^{\ind},\mathcal{F})$.
Savery~\cite{savery2024planar} determined the exact value of $\ex_{\mathcal{P}}(n, C_4^{\ind}, \emptyset)$.
Independently, Ghosh, Gy\H{o}ri, Janzer, Paulos, Salia and Zamora~\cite{ghosh2022maximum} also determined $\ex_{\mathcal{P}}(n, C_5^{\ind}, \emptyset)$.
Later, Savery~\cite{savery2021planar} determined $\ex_{\mathcal{P}}(n, C_6^{\ind}, \emptyset)$ too.

When the host graph is outerplanar, there are more results about it.
Eppstein~\cite{eppstein1993connectivity} proved that $\ex_{\mathcal{OP}}(n,H, \emptyset)=O(n)$
if and only if $H$ is $2$-connected.
For a tree $T$, let $\ell(T)$ denote the number of leaves in $T$, i.e., the number of vertices $v\in V(T)$ of degree one.
Huynh, Joret and Wood~\cite{huynh2022subgraph} proved that for any tree $T$, $\ex_{\mathcal{OP}}(n,T,\emptyset)=\Theta(n^{\ell(T)})$.
Matolcsi and Nagy~\cite{MATOLCSI2022115} proved that there is a constant $c(k)$ that depends only on $k$ such that $\ex_{\mathcal{OP}}(n, C_{k}, \emptyset) = c(k) n + O(1)$.
Matolcsi and Nagy~\cite{MATOLCSI2022115} also showed that there is a constant $g(k)$ depending on $k$ only, such that $g(k) \binom{n}{2} < \ex_{\mathcal{OP}}(n, P_{k+1}, \emptyset) \le 4^k \binom{n}{2}$, and $\lim_{k\to \infty}g(k)^{\frac{1}{k}}=4$.
For small paths,
Gy\H{o}ri, Paulos, Xiao and Zamora~\cite{GYORI2023113205} determined $\ex_{\mathcal{OP}}(n, P_4, \emptyset) = 2n^2-7n+2$ and $\ex_{\mathcal{OP}}(n, P_5, \emptyset) = \frac{17}{4}n^2 +\Theta(n)$.

Motivated by the study of the generalized Tur\'an number, we study the value of \\ $\ex_{\mathcal{OP}}(n,P_k^{\ind},\emptyset)$.
First, we have an exact result when $k=3$.

\begin{theorem}\label{thm: P3 inducibility}
    For every $n\geq 7$, we have
    $$\ex_{\mathcal{OP}}(n,P_3^{\ind},\emptyset)= \binom{n-1}{2}.$$
    Moreover, the equality holds only when $G$ is a star.
    For $n=4$, the extremal graph is the cycle on 4 vertices, which contains $4$ induced $P_3$'s.
    For $n=5$, the extremal graph is the cycle with one pendant leaf, which contains $6$ induced $P_3$'s.
    For $n=6$, the extremal graph is the star or a $C_6$ with one chord joining opposite vertices which contains $10$ induced $P_3$'s.
\end{theorem}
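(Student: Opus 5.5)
The plan is to count induced $P_3$'s by their middle vertex. For a vertex $v$, every pair of non-adjacent neighbours of $v$ gives exactly one induced $P_3$ with centre $v$. Hence
\[
\#P_3^{\ind}(G)=\sum_{v}\Bigl(\binom{d(v)}{2}-e(N(v))\Bigr),
\]
where $e(N(v))$ is the number of edges inside the neighbourhood $N(v)$. In an outerplanar graph each $N(v)$ induces a linear forest. This is because $v$ together with $N(v)$ cannot contain a $K_4$ or $K_{2,3}$ minor, so $G[N(v)]$ has no cycle and no vertex of degree $3$. Also, $\sum_v e(N(v)) = 3t(G)$, where $t(G)$ is the number of triangles.

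First I would prove the upper bound $\binom{n-1}{2}$ by induction on $n$ (for $n\ge 7$, with the small cases as base). Take a vertex $u$ of minimum degree; outerplanar graphs have $\delta\le 2$. Deleting $u$ destroys the induced $P_3$'s that contain $u$:
\begin{itemize}
\item those with $u$ as an endpoint, at most $\sum_{w\in N(u)}(d(w)-1)$ of them;
\item those with $u$ as the centre, at most $\binom{d(u)}{2}$ of them.
\end{itemize}
For the induction I need this loss to be at most $\binom{n-1}{2}-\binom{n-2}{2}=n-2$.

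If $d(u)=1$ with neighbour $w$, the loss is $d(w)-1\le n-2$. Equality forces $w$ to be adjacent to everything and no edges to be destroyed, which by induction leads to the star.

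If $d(u)=2$ with neighbours $w_1,w_2$, the loss is at most $d(w_1)+d(w_2)-2+1$, which can exceed $n-2$. Here I would use outerplanarity: $w_1$ and $w_2$ have at most two common neighbours other than... in fact at most two common neighbours in total including $u$ (otherwise there is a $K_{2,3}$). So $d(w_1)+d(w_2)\le n+1$ or so. Edges between $N(w_1)$ and $N(w_2)$ then subtract from the count, since $P_3$'s through such triangles are not induced.

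This degree-2 case is the main obstacle, because a naive deletion bound overshoots by a constant. To handle it, I would try deleting a different vertex, or rather a pair of vertices. I would use the standard structure of a $2$-connected outerplanar graph: there is a degree-2 vertex $u$ on a face, and its neighbours $w_1,w_2$ are either adjacent (an ear triangle) or lie on a longer face. Alternatively, I would compare $G$ with the star in two steps, counting with the loss bound $2n-c$ over two deleted vertices.

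A cleaner alternative for the upper bound is the direct double count over all vertices:
\[
\#P_3^{\ind}\le \sum_v\binom{d(v)}{2}-3t .
\]
Maximise this with $\sum d(v)\le 4n-6$, $d(v)\le n-1$ and the $K_{2,3}$-freeness constraint that two vertices share at most two common neighbours. One vertex of degree $n-1$ gives $\binom{n-1}{2}$. If two vertices have large degree, they share at most two neighbours, so $d(x)+d(y)\le n+2$, and convexity gives roughly $\binom{n/2}{2}\cdot 2<\binom{n-1}{2}$ for $n\ge 7$. The bulk of the proof would be a case analysis on the maximum degree $\Delta$:
\begin{itemize}
\item If $\Delta=n-1$, then $G-v$ is a linear forest. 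Each of its edges creates a triangle, killing one $P_3$ at $v$, and its at most two extra $P_3$'s per path vertex do not compensate. So the star is the unique maximiser for $n\ge7$.
\item If $\Delta\le n-2$, use the common-neighbour constraint to bound $\sum\binom{d(v)}{2}$ strictly below $\binom{n-1}{2}$.
\end{itemize}
Finally, $n=4,5,6$ are checked by hand using the same formula, verifying the listed extremal graphs $C_4$, $C_4$ plus a pendant vertex, and the star or $C_6$ with a long chord.
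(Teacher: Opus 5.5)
Your proposal has a genuine gap. The step that carries all the difficulty is never carried out. In the deletion induction you correctly find that for a degree-$2$ vertex $u$ with neighbours $w_1,w_2$ the naive loss bound $d(w_1)+d(w_2)-1\le n+1$ exceeds the allowed $n-2$, and then you only list strategies you would try. This case is essentially the $2$-connected case, which is where the theorem is delicate. The graphs $C_4$, $C_4$ with a pendant vertex, and $C_6$ with a long chord all have $\Delta\le n-2$, yet they reach or beat $\binom{n-1}{2}$ for $n=4,5,6$. So the argument must use $n\ge 7$ in an essential way, at linear order in $n$.

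Your fallback does not supply this. The claim that for $\Delta\le n-2$ the common-neighbour constraint pushes $\sum_v\binom{d(v)}{2}$ below $\binom{n-1}{2}$ is false. A fan on $n-1$ vertices with a pendant vertex attached to an end of its path has $\Delta=n-2$ and $\sum_v\binom{d(v)}{2}=\binom{n-2}{2}+3n-8>\binom{n-1}{2}$. So the $-3t$ term is indispensable. Even with it, the convexity estimate $2\binom{n/2}{2}$ only controls the $n^2/2$ term. A vertex of degree $n-2$ or $n-3$ plus $O(n)$ contributions elsewhere is decided at order $n$, and you give no argument there.

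The $\Delta=n-1$ justification is also numerically off: ``at most two extra $P_3$'s per path vertex'' would more than compensate for the lost ones. The correct count is exactly one induced $P_3$ centred at each internal vertex of the linear forest $G-v$. Hence the total is $\binom{n-1}{2}$ minus the number of non-trivial components of $G-v$.

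For comparison, the paper avoids global degree bookkeeping. It splits $G$ at a cut vertex, or, in the $2$-connected case, along a chord $uv$ of the outer Hamiltonian cycle, and applies induction to both sides. Every crossing induced $P_3$ is centred either at the cut vertex, giving at most $(n_1-1)(n_2-1)$ of them, or at $u$ or $v$. In the latter case there are at most $xx'+yy'\le (n_1-2)(n_2-2)+1$, because $u$ and $v$ have at most one common neighbour on each side. The identity $\binom{a}{2}+\binom{b}{2}+ab=\binom{a+b}{2}$ then closes the induction. In the chord case neither side is a star, which makes the bound strict.

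Your deletion route can be rescued, but only by choosing $u$ carefully, for instance as a degree-$2$ vertex on a leaf face of the weak dual. If $w_1w_2$ is an edge, then $w_1uw_2$ is not induced and $w_2$ is not a valid endpoint for paths $uw_1z$. The loss then drops to $d(w_1)+d(w_2)-4\le n-2$. Longer leaf faces and non-$2$-connected graphs would still need their own short arguments. None of this is in the proposal as written.
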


When $k=4$, we provide an asymptotic result.
\begin{theorem}\label{thm: P4 inducibility}
    There exists a constant $C$ such that
    the number of induced $P_4$'s in every outerplanar graph on $n$ vertices is at most $\frac{1}{2}n^2+C n\log n$.
    The lower bound can be achieved asymptotically by the construction in~Theorem~\ref{thm: Pk outerplanar}.
\end{theorem}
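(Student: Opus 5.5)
We will prove that every $n$-vertex outerplanar graph $G$ has at most $\frac12 n^2+Cn\log n$ induced $P_4$'s by induction on $n$, using a balanced separator. Adding edges can destroy induced copies, so we may not assume $G$ is maximal. Instead we fix an outerplanar embedding and take a separator of $G$ itself. Every outerplanar graph has treewidth at most $2$, so there is a separator $S$ with $|S|\le 3$ splitting $V(G)\setminus S$ into parts $A$ and $B$ with no edges between them and $|A|,|B|\le \frac23 n$. We write $a=|A|$ and $b=|B|$, so $a+b=n-|S|$.

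The induced $P_4$'s are of three kinds.
\begin{enumerate}[label=(\roman*)]
\item Copies inside $G[A\cup S]$ or inside $G[B\cup S]$. These are bounded by induction; copies inside $G[S]$ are counted twice, but $S$ has only $O(1)$ vertices.
\item Copies meeting both $A$ and $B$. Such a path must pass through $S$, since $A$ and $B$ are not adjacent.
\item Copies meeting $S$ but lying in neither of the sets in (i) — these are already covered by (ii).
\end{enumerate}

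The main step is bounding the crossing copies in (ii) by $ab+O(n)$. An induced $P_4$ $v_1v_2v_3v_4$ meeting both sides has one of the shapes $A\,S\,B\,B$, $A\,A\,S\,B$, $A\,S\,S\,B$, $A\,S\,B\,S$ and so on, with up to three vertices of $S$ involved.

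For each fixed choice of the vertices in $S$ and of the pattern, we count the admissible endpoint pairs.
\begin{itemize}
\item Pattern $A\,A\,s\,B$ is determined by an edge $v_1v_2$ in $A$ with $v_2\in N(s)$ and $v_1\notin N(s)$, together with a vertex $v_4\in N(s)\cap B$.
\item In an outerplanar graph, the edges from $A$ into $N(s)\cap A$ that avoid $N(s)$ number at most $O(1)$ per vertex, and the ``fan'' structure around $s$ is a path-like order. So the number of such $P_4$'s is about $d_A(s)\cdot d_B(s)$ up to constants.
\item Pattern $A\,s\,s'\,B$ contributes at most $|N(s)\cap A|\cdot|N(s')\cap B|$ when $ss'$ is an edge.
\end{itemize}

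The key inequality needed is that, summed over all patterns, the crossing count is at most $ab+O(n)$. We would prove it with a $K_{2,3}$-minor argument. The degrees into each side are limited: two vertices $s,s'\in S$ cannot both have three common neighbours, and the neighbourhoods of distinct separator vertices in $A$ overlap in at most two vertices. So the neighbourhood sizes $x_s=|N(s)\cap A|$ satisfy $\sum_s x_s\le a+O(1)$, and likewise on the $B$ side. The crossing count is then a bilinear form $\sum c_{ss'}x_sy_{s'}$ with coefficients of order one, and the correct accounting of induced patterns should give $\sum c_{ss'}x_sy_{s'}\le(\sum x_s)(\sum y_{s'})\le ab+O(n)$.

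Given this, the recursion is
\[
f(n)\le f(a+3)+f(b+3)+ab+O(n).
\]
The induction hypothesis gives $\tfrac12(a+3)^2+\tfrac12(b+3)^2+ab=\tfrac12(a+b)^2+O(n)$. The sum of the error terms over a recursion of depth $O(\log n)$ with balanced parts is $O(n\log n)$, because $\frac12(a^2+b^2+2ab)$ telescopes exactly. Thus $f(n)\le \frac12 n^2+Cn\log n$ for a suitable $C$.

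The main obstacle is the crossing-count inequality. Naively, a vertex $s$ could be the middle of $d_A(s)d_B(s)$ paths $A\,s\,\cdot\,B$, and several patterns could add up to more than $ab$. We would first try to show that in the worst case all crossing paths go through a single separator vertex. If that fails, we would choose the separator more cleverly: by outerplanarity, take $S$ to be the two endpoints of a chord, so that $|S|=2$ and $s,s'$ are adjacent. Then we would check the few patterns directly, using that each vertex of $A$ adjacent to both $s$ and $s'$ forms a triangle, which kills induced paths. This leaves at most about $ab$ crossing paths, which matches the construction in Theorem~\ref{thm: Pk outerplanar} for $k=3$, where $\frac{fib(2)}{4}\cdot n^2\cdot 2$ gives $\frac12n^2$.
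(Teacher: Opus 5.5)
\textbf{There is a genuine gap: the crossing bound is asserted, not proved, and the argument you sketch for it fails.} Your frame is the paper's: induction on $f(n)=\frac12n^2+Cn\log n$ across a balanced separator, with the crossing copies bounded by $ab+O(n)$. But that crossing bound is the whole content of the proof.

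The crossing count is not a bilinear form in $x_s=|N(s)\cap A|$ and $y_{s'}=|N(s')\cap B|$ with bounded coefficients. The pattern $A\,A\,s\,B$ contributes up to (number of induced $P_3$'s from $s$ into $A$)${}\cdot y_s$. That first factor is governed by $|A\setminus N(s)|$, not by $x_s$. For example, if $s$ has a single neighbour $u$ in $A$ and $u$ is the centre of a star on $A$, then $x_s=1$ but there are $a-1$ such $P_3$'s.

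Even a correct bound with ``coefficients of order one'' would not suffice. The recursion $f(n)\le f(a)+f(b)+c\,ab$ gives leading term $\frac c2 n^2$, so you need $c=1$ exactly. That holds only because the neighbours of a separator vertex and the endpoints of its induced $P_3$'s compete for the same vertices of a side. Take the \emph{alternating fan}, a vertex joined to every other vertex of a path. The centre has about $a/2$ neighbours and about $a$ induced $P_3$'s into a side of size $a$, and the crossing count through a chord at the centre is essentially $ab$, so there is no slack. The estimate $\sum_s x_s\le a+O(1)$ does not see this trade-off. A three-vertex bag also brings further patterns (two or three separator vertices on the path, several pockets around the bag) that you have not analysed.

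\textbf{How the paper gets this step.} It separates along a single edge $xy$ of $G$ that is a chord of the outer cycle, and bounds $\phi(xy)\le s_1q_1+t_1p_1+s_1t_2+s_2t_1+p_1q_2+p_2q_1$. Here $s_1,s_2$ are the numbers of neighbours of $x$ and of induced $P_3$'s from $x$ on one side, and the other terms are analogous. Outerplanarity gives two structural facts: the neighbours of $x$ precede those of $y$ along the arc, and between consecutive neighbours of $x$ at most one vertex sees both. Using these, each side splits into five classes with linear constraints such as $s_1\le\frac12(d_1+1)+b_1$. On the alternating block $D_1$ only about half the vertices are neighbours of $x$, and the rest contribute at most $2$ each to $s_2$. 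Optimising the resulting quadratic form gives $\phi(xy)\le n_1n_2+n_1+n_2$.

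\textbf{Your chord fallback also needs a balanced chord, which need not exist in general} (a cycle has no chords). The paper takes $G$ extremal with the maximum number of edges. Joining two vertices of a long face of $G+\mathcal C$ that are far apart in $G$ keeps outerplanarity and destroys no induced $P_4$, so every face has at most $10$ edges. Then the dual tree has maximum degree at most $10$, and Lemma~\ref{lem: tree edge cut} gives a chord with at least $n/200$ vertices on each side. So $G$ cannot be assumed maximal outerplanar, but edge-maximality among extremal graphs is available, and it is exactly what makes this work.

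\textbf{The lower-bound computation is also off.} The factor $2$ in $\frac{fib(2)}{4}\cdot n^2\cdot 2$ has no basis. For $k=3$ the construction of Theorem~\ref{thm: Pk outerplanar} is a double star with only $\lfloor (n-2)/2\rfloor^2\sim n^2/4$ induced $P_4$'s. A graph that really reaches $\sim\frac12 n^2$ is the alternating fan: for even $n$ it has $(n-2)(n-4)/2$ induced $P_4$'s of the form even--odd--centre--odd.
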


Let $fib(t)$ be the Fibonacci number with $fib(1)=fib(2)=1$ and $fib(t)=fib(t-1)+fib(t-2)$ for $t\geq 3$.
Then for general $k$, we have a similar form of result as in~\cite{MATOLCSI2022115}.
\begin{theorem}\label{thm: Pk outerplanar}
    When $n$ is sufficiently large, there is a constant $h(k)$ determined by $k$ such that
    \begin{equation*}
    \begin{aligned}
        fib(k-1)\frac{{(n-2k+3)}^2}{4} \le \ex_{\mathcal{OP}}(n, P_{k+1}^{\ind},\emptyset) \le fib(k+1) \binom{n}{2},
    \end{aligned}
    \end{equation*}
\end{theorem}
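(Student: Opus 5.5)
The proof splits into a construction for the lower bound and a counting argument for the upper bound.

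\textbf{Lower bound.} Take a path $v_1v_2\cdots v_m$ with $m=2k-3$ vertices and add two independent sets $A$ and $B$, each of size $(n-m)/2$ (rounding as needed). Join every vertex of $A$ to $v_1$ and every vertex of $B$ to $v_m$; this already gives the factor $(n-2k+3)^2/4$ from the choice of endpoints $a\in A$, $b\in B$. To make the count between $a$ and $b$ Fibonacci-like, replace the bare path by a triangulated strip, say a fan or zigzag triangulation of $v_1\cdots v_m$, chosen so that the graph stays outerplanar and the number of induced paths of length $k-2$ from $v_1$ to $v_m$ is at least $fib(k-1)$. In a zigzag strip an induced path moves forward by one or two steps, and inducedness forbids certain consecutive choices, which is the usual source of Fibonacci recursions. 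I would set up the recursion $f(j)=f(j-1)+f(j-2)$ for induced paths with a prescribed number of vertices and verify it directly. Each such path together with $a$ and $b$ is an induced $P_{k+1}$, because $a$ and $b$ are adjacent only to $v_1$ and $v_m$. I expect some care in fixing the exact strip so that the lengths match $k$, but this is routine.

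\textbf{Upper bound.} Assign each induced $P_{k+1}$ to its unordered pair of endpoints $\{x,y\}$. It then suffices to show that for any two vertices $x,y$ of an outerplanar graph, the number of induced paths on $k+1$ vertices from $x$ to $y$ is at most $fib(k+1)$.

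I would prove a stronger statement by induction on $k$. Embed $G$ with all vertices on the outer face; we may assume $G$ is $2$-connected, or handle blocks separately, since an induced path passes through cut vertices in a forced way. Let $g(k)$ be the maximum number of induced $x$--$y$ paths with $k$ edges. The key structural fact is this: if two induced $x$--$y$ paths leave $x$ through different neighbours $u$ and $w$, then by outerplanarity (no $K_4$ or $K_{2,3}$ minor) the neighbours of $x$ that start such paths can be ordered along the outer cycle, and only the two extreme ones, or consecutive ones joined by chords, can continue. I would aim for the recursion $g(k)\le g(k-1)+g(k-2)$. The first neighbour gives a path problem one step shorter. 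An alternative first step must, by inducedness, skip the vertex $u$ and pass through a vertex adjacent to $u$, which effectively shortens the problem by two. Together with the base cases $g(1)=1$ and $g(2)\le 2$ (at most two common neighbours without a $K_{2,3}$, once one accounts for the edge $xy$), this gives $g(k)\le fib(k+1)$.

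\textbf{Main obstacle.} The hardest step is justifying the recursion rigorously: $x$ may have many neighbours, and one must show that all but essentially two branches are killed by inducedness and the forbidden minors. My first attempt would use a separating chord: for a $2$-connected outerplanar graph, the chord $x$--$u$ splits the graph into two sides, and any induced $x$--$y$ path lies on the side containing $y$, apart from its first edge. This reduces the problem to a nested sequence of polygons. There I would argue via the dual tree, with the path from $x$'s face to $y$'s face, that the induced paths correspond to a Fibonacci-counted set of choices.
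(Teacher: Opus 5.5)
Your proposal has a genuine gap in the lower bound, and the key step of the upper bound is not justified.

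\textbf{Lower bound.} The construction fails as stated. In a triangulated strip the ``skip'' move uses a chord, so it advances two positions along the strip while adding only one vertex to the path. Induced $v_1$--$v_m$ paths therefore have varying numbers of vertices, and fixing the number of vertices destroys the Fibonacci count. Concretely, in the zigzag strip (the square of the path $v_1\cdots v_m$) the induced $v_1$--$v_m$ paths are exactly the sequences of $+1$ and $+2$ steps with no two consecutive $+1$'s. Those with exactly $L$ vertices have $j=2L-1-m$ unit steps, and there are only $\binom{L-j}{j}$ of them. Even optimizing over $m$, $\max_j\binom{L-j}{j}<fib(L)$ for $L\ge 5$ (for $L=5$ the values are $1,4,3$). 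So for $k\ge 6$ you get fewer than $fib(k-1)$ induced paths on $k-1$ vertices. A fan is worse: it has at most two induced $v_1$--$v_m$ paths in total.

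The missing idea is to use $4$-faces instead of triangles, as the paper does. Take the path $x_1\cdots x_{k-1}$, and for each $1\le i\le k-3$ add a \emph{new} vertex $w_i$ adjacent to $x_i$ and $x_{i+2}$, with no chord $x_ix_{i+2}$. Then both moves $x_i\to x_{i+1}$ and $x_i\to w_i\to x_{i+2}$ add exactly as many vertices as they advance. Hence every induced $x_1$--$x_{k-1}$ path has exactly $k-1$ vertices, and these paths correspond to compositions of $k-2$ into parts $1$ and $2$, giving $fib(k-1)$ of them. This uses $2k-4$ vertices and leaves $\lfloor (n-2k+4)/2\rfloor\ge (n-2k+3)/2$ pendant vertices at each end.

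\textbf{Upper bound.} Your framework matches the paper's: charge each copy to its endpoint pair, use $g(1)\le 1$, $g(2)\le 2$, $g(k)\le g(k-1)+g(k-2)$, and note that the second vertex must be one of the two neighbours $x_1,x_2$ of $x$ closest to $y$ on the two arcs of the outer cycle. However, your justification of the $g(k-2)$ term (that the alternative first step ``passes through a vertex adjacent to $u$'') is not a valid mechanism, and the dual-tree fallback is not carried out.

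What is needed is a one-step look-ahead case split. From $x_1$ the path has at most two admissible continuations: $z_1$, on the arc from $x_1$ to $y$ avoiding $x$, and $z_2$, on $x_2$'s arc strictly beyond $x_2$.
\begin{itemize}
\item If $x_1$ has at most one continuation, the $x_1$-branch is at most $g(k-2)$.
\item If both exist, the chord $x_1z_2$ fences in $x_2$. On the arc from $x_2$ to $y$ through $x$, the neighbour of $x_2$ closest to $y$ is $x_1$ or $x$, and both are excluded by inducedness. So $x_2$ has a single continuation, and the $x_2$-branch is at most $g(k-2)$.
\end{itemize}
Either way one branch is at most $g(k-1)$ and the other at most $g(k-2)$. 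This gives $g(k)\le g(k-1)+g(k-2)$, hence $g(k)\le fib(k+1)$.
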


It is proved in~\cite{MATOLCSI2022115} that $\lim_{k\to \infty} {\left( \ex_{\mathcal{OP}}(n, P_{k+1},\emptyset)\right)^{1/k}} =4$.
Then in the non-induced version, Theorem~\ref{thm: Pk outerplanar} implies that $\lim_{k\to \infty} {\left( \ex_{\mathcal{OP}}(n, P_{k+1}^{\ind},\emptyset)\right)^{1/k}} = \frac{\sqrt{5}+1}{2}\approx 1.618$.
 
In Section 2, we give some lemmas that are useful in our proof.
In Section 3, we give the proof of Theorem~\ref{thm: P3 inducibility} and Theorem~\ref{thm: P4 inducibility}. 
In Section 4, we prove Theorem~\ref{thm: Pk outerplanar}.
We also consider the planar version of this problem, which will be discussed in an upcoming paper.

\section{Preliminaries}\label{sec:Preliminaries}
For a graph $G$, $v(G)$ and $e(G)$ denote the number of vertices and the number of edges, respectively.
For a vertex set $U$ of $G$, $G[U]$ denotes the subgraph of $G$ induced by $U$.
We use $N_G(v)$ to denote the set of neighbors of $v$ in the graph $G$.

\begin{lemma}[\cite{GYORI2023113205}]\label{lem: ervin-tree edge cut}
    Let $T$ be an $n$-vertex tree with $\Delta(T)\leq 3$. Then there is an edge $e\in E(T)$ such that both components of $T-e$ have at least $\frac{n-1}{3}$ vertices.
\end{lemma}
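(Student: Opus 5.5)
Lemma~\ref{lem: ervin-tree edge cut} is quoted from~\cite{GYORI2023113205}, but it has a short self-contained proof, which I would give as follows. Orient each edge of $T$ so that it points toward the larger component of $T-e$, with ties broken arbitrarily. Since $T$ has $n-1$ edges and $n$ vertices, some vertex $v$ has out-degree $0$. Then for every edge $e=vu$ at $v$, the component of $T-e$ containing $v$ has at least as many vertices as the component containing $u$.

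Now consider the components of $T-v$. There are at most $3$ of them, since $\Delta(T)\le 3$, and their sizes $a_1\ge a_2\ge a_3\ge 0$ satisfy $a_1+a_2+a_3=n-1$. Take $e$ to be the edge from $v$ to the largest component, of size $a_1$. The two sides of $T-e$ then have sizes $a_1$ and $n-a_1$.

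By the choice of $v$, the side containing $v$ is at least as large as the other, so $n-a_1\ge a_1$. Also $a_1\ge \frac{n-1}{3}$, because $a_1$ is the largest of at most three numbers summing to $n-1$. Hence the smaller side has at least $\min(a_1,n-a_1)=a_1\ge\frac{n-1}{3}$ vertices, and $e$ is the required edge.

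The degenerate case $n=1$ needs a separate remark, since then $T$ has no edge. In that case the statement is vacuous and the paper only applies the lemma to larger trees. The one step needing care is the existence of $v$, which follows from counting: the $n-1$ out-degrees sum to $n-1<n$, so some vertex has out-degree $0$.
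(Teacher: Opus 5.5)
Your proof is correct. The out-degree-zero vertex $v$ is a centroid of $T$: every branch of $T-v$ has at most $n/2$ vertices. Taking the edge to its largest branch, of size $a_1$, gives $a_1\ge\frac{n-1}{3}$ and $a_1\le n-a_1$, exactly as you argue.

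The paper does not prove this lemma; it imports it from \cite{GYORI2023113205}, so there is no internal proof to match. The useful comparison is with the paper's proof of the generalization, Lemma~\ref{lem: tree edge cut}. That proof uses the present lemma only as its base case. It then runs an induction on $k$ and, inside it, on the number of degree-$k$ vertices, replacing the small branches at a degree-$k$ vertex by a pendant path.

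Your centroid argument makes all of that unnecessary. If $\Delta(T)\le k$, the centroid has at most $k$ branches summing to $n-1$, so the largest satisfies $a_1\ge\frac{n-1}{k}$, and still $a_1\le n-a_1$. This gives Lemma~\ref{lem: tree edge cut} in one line for every $k$, and your route is therefore both self-contained and strictly more economical than the paper's.

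One correction to your final remark: for $n=1$ the statement is not vacuous but false, because $T$ has no edge to choose. The lemma tacitly assumes $n\ge 2$. This is harmless where the paper applies it, since the dual tree there has many vertices.
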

We generalize this lemma to all the $k$ with $\Delta(T)\leq k$.
\begin{lemma}\label{lem: tree edge cut}
    Let $T$ be an $n$-vertex tree with $\Delta(T)\leq k$ for $k\geq 3$. Then there is an edge $e\in E(T)$ such that both components of $T-e$ have at least $\frac{n-1}{k}$ vertices.
\end{lemma}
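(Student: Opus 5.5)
We use the standard "centroid edge" argument: orient each edge toward its heavier side and follow the orientation until it stops. For each edge $e=uv\in E(T)$, let $T_u(e)$ and $T_v(e)$ be the components of $T-e$ containing $u$ and $v$. Their sizes sum to $n$. We may assume $n\ge 2$, since otherwise there is no edge and the statement is vacuous (for $n=2$ any edge works, as $1\ge \frac{1}{k}$). Define $m(e)=\min\{|T_u(e)|,|T_v(e)|\}$ and choose an edge $e$ maximizing $m(e)$. The goal is to show $m(e)\ge \frac{n-1}{k}$.

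Suppose, for contradiction, that $m(e)<\frac{n-1}{k}$. Write $e=uv$ with $|T_u(e)|=m(e)$ the smaller side. Then $|T_v(e)|=n-m(e)>n-\frac{n-1}{k}$. The vertex $v$ has at most $k-1$ neighbours other than $u$, say $w_1,\dots,w_d$ with $d\le k-1$. Together with $v$ itself, their subtrees (the components of $T-vw_i$ containing $w_i$) partition $T_v(e)$, so
\[
\sum_{i=1}^d |T_{w_i}(vw_i)| = n-m(e)-1 > (n-1)-\frac{n-1}{k}=\frac{(k-1)(n-1)}{k}.
\]
We have $d\ge 1$, since otherwise the left side is $0$ while the right side is positive for $n\ge 2$. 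Hence some $i$ satisfies $|T_{w_i}(vw_i)|>\frac{n-1}{k}$.

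Now consider the edge $f=vw_i$. One side is $T_{w_i}(vw_i)$, which has size greater than $\frac{n-1}{k}>m(e)$. The other side contains $T_u(e)\cup\{v\}$, so it has size at least $m(e)+1$. Thus $m(f)>m(e)$, contradicting the maximality of $e$.

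The main subtlety is the counting step: we must subtract $v$ itself and use $\deg(v)\le k$ to get at most $k-1$ branches, so that averaging yields the bound $\frac{n-1}{k}$ rather than $\frac{n}{k}$. Lemma~\ref{lem: ervin-tree edge cut} is the case $k=3$ of this same argument.
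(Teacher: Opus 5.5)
Your proof is correct, and it takes a different route from the paper. The paper uses a double induction: on $k$, with the cited case $k=3$ (Lemma~\ref{lem: ervin-tree edge cut}) as the base, and, for fixed $k$, on the number of vertices of degree exactly $k$. It picks a degree-$k$ vertex $v$ with a branch $T_1$ of size at least $\frac{n-1}{k}$. Either the edge $vv_1$ already works, or the other branches at $v$ have fewer than $\frac{n-1}{k}$ vertices in total. In that case they are replaced by a single pendant path at $v$ of the same total size. This lowers the number of degree-$k$ vertices without changing $n$, and the edge given by induction cannot lie on that path (every path edge cuts off too few vertices), so it transfers back to $T$.

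Your extremal argument replaces all of this with one averaging step over the at most $k-1$ branches at $v$ away from $u$. It takes an edge maximizing the smaller side and shows that, if the bound failed, a neighbouring edge would have a strictly larger smaller side. What your version buys:
\begin{itemize}
\item it is self-contained, since it does not need the cited $k=3$ result and in fact reproves it;
\item it is shorter;
\item it shows that every edge maximizing $m(e)$ works;
\item the hypothesis $k\ge 3$ plays no role.
\end{itemize}
The paper's reduction gains nothing here beyond reusing the known $k=3$ case.

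One small wording point: for $n=1$ the statement is not vacuous but false, since there is no edge to exhibit. So $n\ge 2$ is an implicit hypothesis of the lemma; this is harmless.
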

\begin{proof}
    We prove by induction on $k$. 
    When $k=3$, then it holds by Lemma \ref{lem: ervin-tree edge cut}.
    For $k\geq 4$, let $T$ be a tree with $\Delta(T)\leq k$. We may assume the number of vertices with degree $k$ is at least one.
    Then, suppose $v$ has degree $k$ with neighbors $\{v_1,\dots,v_k\}$. Let $T_i$ denote the maximal sub-tree of $T$ containing the vertex $v_i$ but not $v$. We let $v(T_i)=n_i$.
    And we prove by the number of $k$-degree vertices of $T$, which is denoted by $t$ and $t\geq 1$.

    Clearly, at least one of $n_i$ is at least $\frac{n-1}{k}$, say $n_1\geq \frac{n-1}{k}$. 
    And if $\sum_{i=2}^k n_i\geq \frac{n-1}{k}$, we are done by choosing $e=vv_1$.
    Then, assume $\sum_{i=2}^k n_i <\frac{n-1}{k}$. And we consider the tree $T'$ obtained from $T$ by deleting the vertices in $\bigcup_{i=2}^k T_i$ and identifying the vertex $v$ and a terminal vertex (say $r_1$) of a $(\sum_{i=2}^k n_i+1)$-vertex path $(r_1,r_2,\dots, r_{\sum_{i=2}^k n_i+1})$.
    The new tree $T'$ has at most $t-1$ vertices with degree $k$, and with $n$ vertices.
    Then, by the induction hypothesis, there is an edge $e$ such that both components of $T'-e$ have at least $\frac{n-1}{k}$.
    Moreover, $e$ is not in the path $(v,r_1,\dots,r_{\sum_{i=2}^k n_i+1})$. Therefore, with the choice of $e$, both components of $T-e$ have at least $\frac{n-1}{k}$ vertices.
\end{proof}
\section{Count induced \texorpdfstring{$P_3$}{P3} and \texorpdfstring{$P_4$}{P4} in outerplanar graphs}\label{sec: induce P4 outerplanar}

As a warm-up, we first determine the value of $\ex_{\mathcal{OP}}(n,P_3^{\ind},\emptyset)$. 
Similar ideas in this proof will be used in the proof of Theorem~\ref{thm: P4 inducibility}.

\noindent
\textit{Proof of Theorem \ref{thm: P3 inducibility}.}
    The case when $n \le 6$ can be checked easily.
    Now let us prove the case when $n \ge 7$.
    The lower bound is achieved by a star on $n$ vertices.
    Then, we focus on the upper bound.
    We will prove it by induction on $n$. 
    When $n = 7$, one can check by a computer program that the upper bound holds and the unique extremal graph is the star on 7 vertices.
    For $n\geq 8$, let $G$ be the $n$-vertex outerplanar graph with the maximum number of induced $P_3$'s.

    If $G$ is not $2$-connected, then suppose $v$ is a cut vertex.
    There exists vertex set $U, U'$ satisfying $V(G) = U\cup U'$, $U\cap U'=\{v\}$, and all edges are in $U$ or $U'$.
    Let $|U|=n_1$, $|U'|=n_2$, then $n_1+n_2=n+1$.
    Let $H = G[U], H' = G[U']$.
    Then, by induction, the number of induced $P_3$'s contained in $H$ is at most $\binom{n_1-1}{2}$, and the number of induced $P_3$'s in $H'$ is at most $\binom{n_2-1}{2}$, and the equalities hold only when $H$ and $H'$ are both stars.
    The number of induced $P_3$'s containing vertices from both $U\setminus \{v\}$ and $U'\setminus \{v\}$ is at most $(n_1-1)(n_2-1)$.
    As a result, the number of induced $P_3$'s contained in $G$ is at most
    $$\binom{n_1-1}{2}+\binom{n_2-1}{2}+(n_1-1)(n_2-1)\leq \binom{n-1}{2}.$$
    And the equality holds only when $G$ is a star with $v$ as the center vertex.

    If $G$ is $2$-connected, then we draw $G$ in the plane.
    By~\cite{leydold1998minimal}, $G$ has a unique Hamiltonian cycle, which is the outer cycle.
    Suppose there is no chord in this cycle, which implies $G$ is a cycle with $n$ vertices, then the number of induced $P_3$'s is at most $n<\binom{n-1}{2}$ when $n\geq 6$.
    And suppose $uv$ is a chord in the outer cycle, it partitions the outer cycle into two parts $U$ and $U'$ where $U\cap U'=\{u,v\}$.
    We let $|U|=n_1$, $|U'|=n_2$, $H = G[U]$, $H' = G[U']$, then $n_1+n_2=n+2$.
    And notice that neither $H$ nor $H'$ is a star, the number of induced $P_3$'s contained in each $H,H'$ is at most $\binom{n_i-1}{2}-1$ by induction.
    Let $x = |N_{H}(u) \setminus \{v\}|, x' = |N_{H'}(u) \setminus \{v\}|, y = |N_{H}(v) \setminus \{u\}|, y' = |N_{H'}(v) \setminus \{u\}|$, then we have $x,y \le n_1-2$, $x',y' \le n_2-2$.
    Then the number of induced $P_3$'s not contained in $H$ or $H'$ is at most $xx'+yy'$.
    By the outerplanarity, $u$ and $v$ have at most one common neighbor in $H$ and $H'$ respectively, then $x+y\leq n_1-1$, $x'+y'\leq n_2-1$.
    Then $xx'+yy'\leq (n_1-2)(n_2-2)+1$.
    As a result, the number of induced $P_3$'s contained in $G$ is at most
    $$\binom{n_1-1}{2}+\binom{n_2-1}{2}-2+(n_1-2)(n_2-2) + 1 < \binom{n-1}{2}.$$
    \hfill $\square$ \par
    
\noindent
Next, we give the proof of Theorem \ref{thm: P4 inducibility}.

\noindent
\textit{Proof of Theorem \ref{thm: P4 inducibility}}
    The lower bound will be defined in Section~\ref{sec: induce Pk outerplanar} and can be verified easily.
    We omit the proof of the lower bound here.
    Now we prove the upper bound.
    Let $f(n) = \frac{1}{2}n^2 + Cn\log n$, where $C$ is a constant.
    We will prove it by induction on $n$.
    Let $N_0 \ge 400$ be a large constant to be determined later.
    When $n\leq N_0$, by picking $C$ large enough, the base case holds.

    Suppose the theorem holds for all outerplanar graphs on at most $n-1$ vertices, then now we prove the case for $n$.
    Let $G$ be an $n$-vertex outerplanar graph with maximum number of induced $P_4$'s, and for the same number of induced $P_4$'s, we pick $G$ with the maximum number of edges.
    Let $G'\supseteq G$ be a maximal outerplanar graph containing $G$ as a subgraph. 
    Then $G'$ admits a planar drawing with an outer cycle $\mathcal{C}$.
    We get a drawing of $G$ from this embedding by removing the edges not in $G$.
    Let $G''=G+\mathcal{C}$ be the graph obtained by adding edges in $\mathcal{C}$ to $G$.
    In the following, we fix the drawing of $G$.
    Note that all edges of $G$ are inside $\mathcal{C}$ except the edges in $\mathcal{C}$.
    \begin{claim}\label{claim: xy}
        When $n\geq 400$,
        there exists an edge $xy\in E(G)$ such that $xy$ divides $\mathcal{C}$ into two graphs $U$ and $U'$ with $n_1$ and $n_2$ vertices respectively, such that $V(U)\cap V(U')=\{x,y\}$ and  $n_1,n_2 \ge \frac{n}{200}$.
    \end{claim}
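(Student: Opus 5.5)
We argue by contradiction, using the extremality of $G$ together with the lower bound from the construction in Theorem~\ref{thm: Pk outerplanar}. For $k=3$ that construction gives $G$ at least $fib(2)\frac{(n-3)^2}{4}=\frac{(n-3)^2}{4}$ induced $P_4$'s. We will show that if no edge of $G$ splits $\mathcal{C}$ in a balanced way, then $G$ has only about $cn^2/25$ induced $P_4$'s. This is smaller than $(n-3)^2/4$ once $n\ge 400$.

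\textbf{Step 1: find a central face.} Suppose no chord $xy\in E(G)$ of $\mathcal{C}$ leaves at least $\frac{n}{200}$ vertices on each side. (Edges of $\mathcal{C}$ never qualify, since one side then has only the two vertices $x,y$.) Consider the plane graph $G''=G+\mathcal{C}$ and its bounded faces. Every chord of $G$ has a \emph{small side}, containing fewer than $\frac{n}{200}$ vertices. Orient each edge of the weak dual tree of $G''$ from the face on the small side of the corresponding chord toward the face on the large side. A finite tree oriented this way has a sink. So there is a bounded face $F$ of $G''$ such that every chord on $\partial F$ cuts off, away from $F$, a pocket with fewer than $\frac{n}{200}$ vertices. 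Since $F$ is a face, no edge of $G$ lies inside it. Hence every edge of $G$ is one of the following:
\begin{itemize}
\item a $\mathcal{C}$-edge on $\partial F$;
\item an edge inside one pocket, including the chord itself.
\end{itemize}
Each boundary vertex of $F$ lies on exactly two edges of $\partial F$. Therefore it belongs to at most two pockets.

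\textbf{Step 2: endpoints of an induced $P_4$ are close.} Let $abcd$ be an induced $P_4$ (indeed any $P_4$) in $G$. Each of its three edges either stays inside a pocket or moves one step along $\partial F$. Consequently, $d$ lies in the union of the pockets and boundary vertices within boundary-distance at most $3$ of a pocket containing $a$ (or of $a$ itself, if $a$ is a boundary vertex). That union consists of $O(1)$ pockets, say at most $8$. So for each $a$ there are fewer than $8\cdot\frac{n}{200}+O(1)$ possible choices of $d$. The number of unordered endpoint pairs $\{a,d\}$ is therefore at most $\frac{n}{2}\left(\frac{8n}{200}+O(1)\right)=\frac{n^2}{50}+O(n)$.

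\textbf{Step 3: few paths per endpoint pair.} Next we bound by an absolute constant $c$ the number of $3$-edge paths joining two fixed vertices $a,d$ of an outerplanar graph. Let $ab_ic_id$, $i=1,\dots,m$, be such paths.
\begin{itemize}
\item If some $b_i$ is shared by three of the paths, the three corresponding distinct $c_i$'s give a $K_{2,3}$-subgraph on $\{b_i,d\}$. This contradicts Chartrand--Harary.
\item The same holds for a repeated $c_i$.
\item Three paths with pairwise distinct $b$'s and pairwise distinct $c$'s contract to a $K_{2,3}$-minor.
\end{itemize}
A short case analysis of these three situations gives $c\le 4$ or so. The total is then at most $c\left(\frac{n^2}{50}+O(n)\right)\le\frac{2n^2}{25}+O(n)$, which is less than $\frac{(n-3)^2}{4}$ for $n\ge 400$ (the $O(n)$ terms are tracked explicitly). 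This contradicts the maximality of $G$.

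\textbf{Main obstacle.} I expect Step~3 to be the delicate point: the constant-many-paths bound must be made rigorous, uniformly over the possible overlaps between the paths. Step~2 also needs care with vertices lying in two pockets and with $\mathcal{C}$-edges that are missing from $G$. However, missing edges only reduce the count, so they help rather than hurt.
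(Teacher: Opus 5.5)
Your proof is correct, and it takes a genuinely different route from the paper. The paper uses the tie-break in the choice of $G$ (most edges among maximizers). A face of $G''$ with at least $11$ edges would admit a new chord $u_1u_6$ between two vertices at distance at least $5$, and this chord destroys no induced $P_4$. So all faces have length at most $10$, the weak dual tree has maximum degree at most $10$, and Lemma~\ref{lem: tree edge cut} together with a face/vertex count produces the balanced edge. You need neither the face-length bound nor Lemma~\ref{lem: tree edge cut}. The sink of your orientation is a centroid-type face, which exists in every tree. You then rule out the unbalanced situation quantitatively: both ends of every $P_4$ lie in a bounded number of consecutive pockets of size below $n/200$. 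That leaves $O(n^2/200)$ endpoint pairs with $O(1)$ paths each, far below the double-star bound $\lfloor (n-2)/2\rfloor^2\ge (n-3)^2/4$. Your argument thus uses only the count-maximality of $G$, and with your constants it would even give a better balance fraction than $1/200$. The price is importing the quadratic lower bound and a per-pair path bound. The paper's argument is purely structural and needs no information about the size of the extremal number.

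Step~3 needs one repair. For arbitrary $3$-edge paths the bound $4$ is false. The $8$-cycle $a\,p\,u\,q\,d\,s\,v\,r$ with chords $au,ud,dv,va,uv$ is outerplanar and contains six $a$--$d$ paths with three edges. The same example shows your third bullet is not valid as stated. The paths $a\,p\,u\,d$, $a\,u\,v\,d$, $a\,v\,s\,d$ have pairwise distinct second vertices and pairwise distinct third vertices, yet their middle pairs overlap and no $K_{2,3}$ minor arises.

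The fix is to use that you only count induced paths. If $a\,b\,c\,d$ is induced, then $b$ must be one of the (at most two) neighbors of $a$ closest to $d$ along the two arcs of $\mathcal{C}$ between $a$ and $d$. Otherwise the edge from $a$ to the closest neighbor $x_1$ on that arc separates $b$ from $d$, forcing $c=x_1\in N(a)$. Moreover, $c$ is a common neighbor of $b$ and $d$, and there are at most two such. This gives at most $4$ induced $P_4$'s per endpoint pair; the argument of Section~\ref{sec: induce Pk outerplanar} even gives $3$. With that, your final inequality goes through as written.
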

    \noindent
    \textit{Proof of Claim~\ref{claim: xy}:}
        In the graph $G''$, we construct an auxiliary graph $H$~(the dual graph of $G''$) as follows.
        The vertex set of $H$ is the bounded faces of $G''$, and we add an edge between two faces if they share an edge in $G''$.
        Then, since $G''$ is outerplanar, $H$ must be a tree.
        Note that each edge in $G$ except the edges in $\mathcal{C}$ corresponds to an edge in $H$.
        We may assume $H$ is not empty, otherwise, $G$ is a subgraph of $\mathcal{C}$, then $G$ contains at most $n \le f(n)$ induced $P_4$, a contradiction.

        We claim that each face of $G''$ has at most $10$ edges. 
        Suppose there exists a face $f$ with at least $11$ edges.
        We label the vertices of $f$ as $u_{1}, u_2, \ldots,u_{t}$ in clockwise order~($t \ge 11$).
        By outerplanarity, the distance between $u_1$ and $u_6$ in $G$ is at least five.
        Then if we add the edge $u_1u_6$ to $G$, the number of induced $P_4$'s would not decrease while the graph has more edges, a contradiction.
        This implies that $\Delta(H)\leq 10$.

        
        Then by Lemma \ref{lem: tree edge cut}, there exists an edge $e$ in $H$ such that both components of $H-e$ have at least $\frac{n-1}{10}$ vertices.
        Let $xy$ be the edge in $G$ corresponding to $e$, then $xy$ divides $\mathcal{C}$ into two parts $U$ and $U'$ with $n_1$ and $n_2$ vertices respectively, such that $V(U)\cap V(U')=\{x,y\}$.
        Moreover, let $f$ be the number of bounded faces in $G''$, both $G''[U]$ and $G''[U']$ have at least $\frac{f-1}{10}$ faces.

        Since each bounded face in $G''$ has at most $10$ vertices and at least three vertices, combining Euler's formula, we have $f \le n \le 10f$.
        Then the number of vertices in $U$ is at least $f_1 \ge \frac{f-1}{10} \ge \frac{n}{200}$ where $f_1$ is the number of bounded faces in $G''[U]$.
        The same conclusion holds for vertices in $U'$.
        \hfill $\blacksquare$ \par

    \begin{figure}
        \centering
        \includegraphics[width=0.4\linewidth]{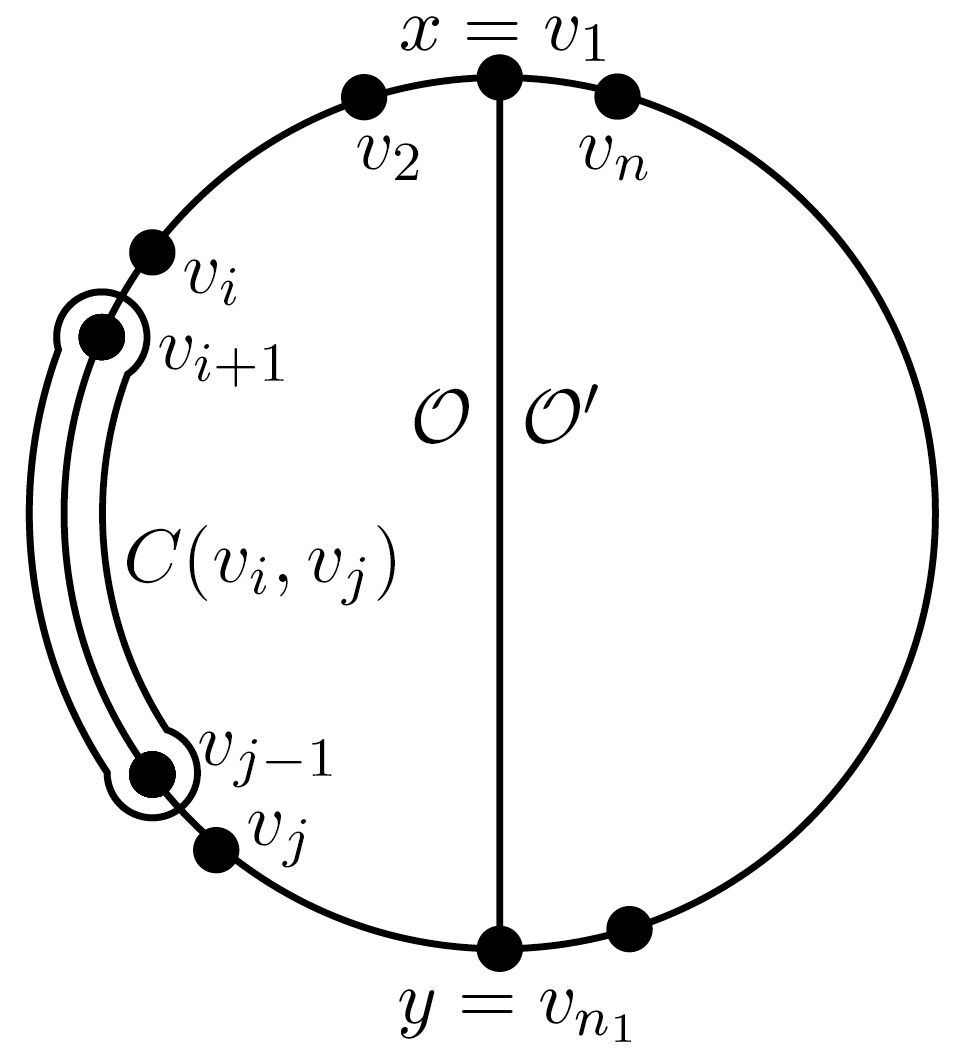}
        \caption{An example for $\mathcal{O}, \mathcal{O}'$ and $C(v_i,v_j)$.}
        \label{fig:def cuv}
    \end{figure}

    From now on, we fix an edge $xy$ as in Claim~\ref{claim: xy}.
    Let $U$ and $U'$ be the two parts of $\mathcal{C}$ divided by $xy$ with $n_1$ and $n_2$ vertices respectively, such that $V(U)\cap V(U')=\{x,y\}$,$n_1, n_2 \ge \frac{n}{200}$ and $n_1+n_2=n+2$.
    Let $\mathcal{O}=G[U]$ and $\mathcal{O}'=G[U']$.

We may assume $x=v_1,v_2,\dots,v_{n_1}=y,v_{n_1+1},\dots,v_{n}$ are the vertices in $\mathcal{C}$ in counter-clockwise order, and $V(\mathcal{O})=\{v_1,\dots,v_{n_1}\}$, $V(\mathcal{O}')=\{v_{n_1+1},\dots,v_{n_1+n_2}\}$.
For two vertices $v_i,v_j \in U\setminus \{x,y\}, i<j$, let $C(v_i,v_j) = \{v_{i+1}, \ldots, v_{j-1}\}$~(see Figure~\ref{fig:def cuv}).

Let $\mathcal{O}(x) = N_{\mathcal{O}}(x)\setminus \{y\}$, $\mathcal{O}(y) = N_{\mathcal{O}}(y)\setminus \{x\}$.
Similarly, we define $\mathcal{O}'(x)$ and $\mathcal{O}'(y)$.
Let $\phi(xy)$ be the number of induced $P_4$'s that intersect both $U\setminus\{x,y\}$ and $U'\setminus\{x,y\}$, i.e., the induced $P_4$'s not contained in either $\mathcal{O}$ or $\mathcal{O}'$.
Then it is obvious that all such induced $P_4$ must contain at least one of $\{x,y\}$.

Let $s_1 = \left|\mathcal{O}(x) \setminus \{y\}\right|$ be the number of neighbors of $x$ in $\mathcal{O}$ except $y$ and $s_2$ be the number of induced $P_3$'s starting at $x$ and avoiding $y$ in $\mathcal{O}$.
Similarly, we define $t_1,t_2$ for $x$ in $\mathcal{O}'$, $p_1,p_2$ for $y$ in $\mathcal{O}$, $q_1,q_2$ for $y$ in $\mathcal{O}'$.
Then there are six types of induced $P_4$ counted in $\phi(xy)$, which are shown in Figure~\ref{fig: six types}.
Then we have
\begin{equation}\label{eq: phi}
    \phi(xy)\leq s_1q_1+t_1p_1+s_1t_2+s_2t_1+p_1q_2+p_2q_1.
\end{equation}

\begin{figure}
    \centering
    \includegraphics[width=\linewidth]{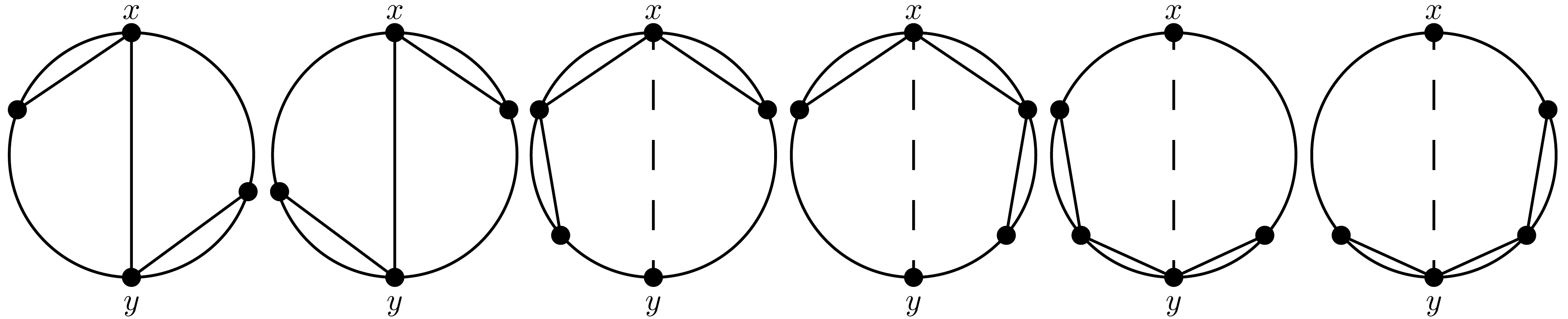}
    \caption{Different types of induced $P_4$ in $\phi(xy)$.}
    \label{fig: six types}
\end{figure}

Take a vertex $v \in U\setminus \{x,y\}$.
If $v \in N_{\mathcal{O}}(x) $, we say that $v$ contributes one to $s_1$ and $v$ contributes zero to $s_2$.
If $v \notin N_{\mathcal{O}}(x)$, we say the \textit{contribution} of $v$ to $s_2$ is the number of induced $P_3$'s starting at $x$, avoiding $y$ in $\mathcal{O}$, and ending at $v$.
In this case, $v$ contributes zero to $s_1$.
Then we can estimate $s_1$ and $s_2$ by the sum of the contributions of all vertices in $U\setminus \{x,y\}$. 
By outerplanarity, we have the following facts.

\begin{itemize}
    \item \textbf{Fact 1.} Suppose $v_i\in \mathcal{O}(x)$, then for every $j<i$, $v_j \notin \mathcal{O}(y)$. 
    Similarly, suppose $v_i \in \mathcal{O}(y)$, then for every $j > i$, $v_j \notin \mathcal{O}(x)$. 
    Then as a result, we may assume $\mathcal{O}(x) = \{v_{i_1},v_{i_2},\dots,v_{i_{s_1}}\}$ and $\mathcal{O}(y) = \{v_{j_1},v_{j_2},\dots,v_{j_{p_1}}\}$ in counter-clockwise order and $j_1 \ge i_{s_1}$.
    \item \textbf{Fact 2.} If $i_{s_1} < j_1$~($v_\ell$ does not exist), then for every vertex in $C(v_{i_{s_1}},v_{j_1})$, it contributes at most one to $s_2$ and at most one to $p_2$.
    \item \textbf{Fact 3.} For every two consecutive vertices $v_{i_m},v_{i_{m+1}} \in \mathcal{O}(x)$, at most one vertex in $C(v_{i_m},v_{i_{m+1}})$ contributes two to $s_2$.
\end{itemize}


Then, we partition the vertices in $U \setminus \{x,y\}$ into several parts, and calculate the contribution of each part to $s_1,s_2,p_1,p_2$.

\begin{itemize}
    \item Let $A$ be the collection of all the vertices not in $\mathcal{O}(x)\cup \mathcal{O}(y)$ that have at most one neighbor in $\mathcal{O}(x)$ and at most one neighbor in $\mathcal{O}(y)$.
    \item Let $B_1$ be the collection of vertices $v_i \in \mathcal{O}(x)$ such that there exists a vertex in $\mathcal{O}(x)$ after $v_i$, say $v_j$~($j > i$ and $C(v_i,v_j) \cap \mathcal{O}(x) = \emptyset$), and $v_i,v_j$ have no common neighbor in $C(v_i,v_j)$.
    \item Similarly, we define $B_2$ for $y$ but in a reverse order. 
    Let $B_2$ be the collection of vertices $v_i \in \mathcal{O}(y)$ such that there exists a vertex in $\mathcal{O}(y)$ before $v_i$, say $v_j$~($j < i$ and $C(v_j,v_i) \cap \mathcal{O}(y) = \emptyset$), and $v_j,v_i$ have no common neighbor in $C(v_j,v_i)$.
    \item Let $D_1=C(v_{i_1},v_{i_{s_1}})\cup \{v_{i_1},v_{i_{s_1}}\}\setminus(A\cup B_1).$
    \item Let $D_2=C(v_{j_1},v_{j_{p_1}})\cup \{v_{j_1},v_{j_{p_1}}\}\setminus(A \cup B_2).$
\end{itemize}

By Facts 1, 2 and 3, every vertex in $U\setminus \{x,y\}$ belongs to exactly one of $A,B_1,B_2,D_1,D_2$ except when $i_{s_1} = j_{1}$, the vertex $v_{j_1}$ belongs to both $D_1$ and $D_2$.
When $i_{s_1} = j_{1}$, we let $\ell = i_{s_1} = j_{1}$ and we say $v_\ell$ exists in this case.
We use corresponding lower case letter to denote the size of the set.
Then we have $a +b_1+b_2+d_1+d_2\le n_1-1$.
Next, we calculate the contribution of each part to $s_1,p_1$ and $s_2,p_2$~(see Table~\ref{tab: contribution} for details).

\begin{enumerate}
    \item There exists at most one vertex $v_x \in B_1 \cup D_1$ adjacent to $v_{j_1}$. In this case, $v_x$ contributes one to $p_2$. Similarly, there exists at most one vertex $v_y \in B_2 \cup D_2$ adjacent to $v_{i_{s_1}}$. In this case, $v_y$ contributes one to $s_2$.
    \item For every vertex in $A$, it contributes zero to $s_1,t_1$, at most one to $s_2$ and at most one to $p_2$.
    \item For every vertex in $B_1$, it contributes one to $s_1$ and zero to $s_2,p_1$. $B_1$ contributes at most one to $p_2$ only when $v_x \in B_1$.
    Symmetrically, for every vertex in $B_2$, it contributes one to $p_1$ and zero to $s_1,p_2$. And $B_2$ contributes at most one to $s_2$ only when $v_y \in B_2$.
    \item We consider the contribution of vertices in $D_1$ together. By Facts 1, 2 and 3 and the definition of each set, one can verify that $D_1$ must have an odd number of vertices and the vertices are alternately contained in $\mathcal{O}(x)$, that is, exactly $\frac{1}{2}(|D_1|+1)$ vertices are in $\mathcal{O}(x)$ which contributes $\frac{1}{2}(|D_1|+1)$ to $s_1$, and at most one to $p_1$ only when $v_\ell$ exists and contributes by $v_{\ell}$.
    For the rest of $\frac{1}{2}(|D_1|-1)$ vertices in $D_1$, each of them contributes at most two to $s_2$, and at most one to $p_2$ which only happens when $v_x \in D_1$.
    \item Similarly we have the contribution of vertices in $D_2$.
\end{enumerate}



\begin{table}[t]\label{tab: contribution}
\centering
\begin{tabular}{|c|c|c|c|c|}
\hline
\diagbox{Sets}{Terms}&$s_1$&$s_2$&$p_1$&$p_2$\\ 
\hline
$A$&~&$\leq|A|$&~&$\leq|A|$\\
\hline
$B_1$&$|B_1|$&~&~&\text{$1$ when $v_x \in B_1$}\\
\hline
$B_2$&~&\text{$1$ when $v_y \in B_2$}&$|B_2|$&~\\
\hline
$D_1$&$\frac{1}{2}(|D_1|+1)$&$|D_1|-1$& \text{$1$ when $v_\ell$ exists}&\text{$1$ when $v_x \in D_1$}\\
\hline
$D_2$&\text{$1$ when $v_\ell$ exists}& \text{$1$ when $v_y \in D_2$} & $\frac{1}{2}(|D_2|+1)$&$|D_2|-1$\\
\hline
\text{$v_\ell$ when exists}&1&~&1&~\\
\hline
\end{tabular}
\caption{The contribution of each set to $s_1,s_2,p_1,p_2$.}
\end{table}


 
Note that when $v_\ell$ exists, the contribution of $v_\ell$ to $s_1$ and $p_1$ is counted twice in both $D_1$ and $D_2$.
Then, by summing up all the contributions, we have
\begin{equation}\label{eq: inequalities 1}
    \begin{aligned}
        & a + b_1 + b_2 + d_1 + d_2 \leq n_1-1, \\
        &s_1 \leq \frac{1}{2}(d_1+1)+b_1,\\
        &p_1 \leq \frac{1}{2}(d_2+1)+b_2,\\
        &s_2 \leq d_1-1+a+1,\\
        &p_2 \leq d_2-1+a+1.
    \end{aligned}
\end{equation}
Symmetrically, we can define $A',B_1',B_2',D_1',D_2'$ and $v_\ell'$ in $\mathcal{O}'$, and then we have 
\begin{equation}\label{eq: inequalities 2}
    \begin{aligned}
        & a' + b_1' + b_2' + d_1' + d_2' \leq n_2-1, \\
        &t_1 \leq \frac{1}{2}(d_1'+1)+b_1',\\
        &q_1 \leq \frac{1}{2}(d_2'+1)+b_2',\\
        &t_2 \leq d_1'-1+a'+1,\\
        &q_2 \leq d_2'-1+a'+1.
    \end{aligned}
\end{equation}

Then we claim that by combining (\ref{eq: phi}), (\ref{eq: inequalities 1}) and (\ref{eq: inequalities 2}), we have the following upper bound for $\phi(xy)$:
\begin{equation}\label{eq: phi upper bound}
\begin{aligned}
    \phi(xy)&\leq n_1n_2+n_1+n_2.
\end{aligned}
\end{equation}
Let $a_0 = a + 1$, $a_0' = a+1$ for convenience, then we have
\begin{align*}
    \phi(xy)&\leq \left(\frac{1}{2}(d_1+1)+b_1\right)\left(\frac{1}{2}(d_2'+1)+b_2'\right)+\left(\frac{1}{2}(d_1'+1)+b_1'\right)\left(\frac{1}{2}(d_2+1)+b_2\right)\\&+(d_1'-1+a_0')\left(\frac{1}{2}(d_1+1)+b_1\right)
    +\left(d_1-1+a_0\right)\left(\frac{1}{2}(d_1'+1)+b_1'\right)\\&+(d_2'-1+a_0')\left(\frac{1}{2}(d_2+1)+b_2\right)+\left(d_2-1+a_0\right)\left(\frac{1}{2}(d_2'+1)+b_2'\right)
\end{align*}
We first deal with the terms without $b_i,b_i'$, denoted by $I_1$.
\begin{align*}
    I_1 &= \frac{1}{4}(d_1+1)(d_2'+1)+\frac{1}{4}(d_1'+1)(d_2+1)+\frac{1}{2}(d_1+1)(d_1'-1+a_0')+\frac{1}{2}(d_1-1+a_0)(d_1'+1)\\& +\frac{1}{2}(d_2+1)(d_2'-1+a_0')+\frac{1}{2}(d_2-1+a_0)(d_2'+1)
\end{align*}
Let $d=d_1+d_2$, $d'=d_1'+d_2'$, $b=b_1+b_2$, and $b'=b_1'+b_2'$, then we have $a_0 + b+d \le n_1$ and $a_0' + b'+d' \le n_2$.
Moreover, we have that
\begin{align*}
    I_1 &= \frac{1}{2}a_0'(d+2)+\frac{1}{2}a_0(d'+2)  + d d'- \frac{3}{4}(d_1d_2'+d_2d_1')-\frac{3}{2}+\frac{1}{4}(d+d')\\
    &\leq  \frac{1}{2}\left(n_2-b'-d'\right)(d+2)+\frac{1}{2}\left(n_1-b -d\right)(d'+2)+dd' + \frac{1}{4}(d+d') \\
    &\le  \frac{1}{2}(n_1-b)d' +\frac{1}{2}(n_2-b')d + n_1+n_2-b-b'\\
    &\le  \frac{1}{2}(n_1-b)(n_2-b') +\frac{1}{2}(n_2-b')(n_1-b) + n_1+n_2-b-b'\\
    &\le  n_1n_2 -(b-1)n_2 -(b'-1)n_1 + bb'
\end{align*}
Then we deal with the terms with $b_i$ or $b_i'$, denoted by $I_2$. 
\begin{equation*}
\begin{aligned}
    I_2=&b_1b_2'+\frac{1}{2}b_1(d_2'+1)+\frac{1}{2}b_2'(d_1+1)+b_1'b_2+\frac{1}{2}b_1'(d_2+1)+\frac{1}{2}b_2(d_1'+1)\\
    &+ b_1(d_1'-1+a_0')+b_1'(d_1-1+a_0)+b_2(d_2'-1+a_0')+b_2'(d_2-1+a_0)\\
    \leq & b_1b_2'+b_1'b_2+b_1\left(\frac{1}{2}(d_2'+1)+a_0'+d_1'-1\right)+b_1'\left(\frac{1}{2}(d_2+1)+a_0+d_1-1\right)\\&+b_2\left(\frac{1}{2}(d_1'+1)+a_0'+d_2'-1\right)+b_2'\left(\frac{1}{2}(d_1+1)+a_0+d_2-1\right)\\
    \leq &b_1b_2'+b_1'b_2+b(n_2-b')+b'(n_1-b)\\
    \leq & bn_2+b'n_1-2bb'+b_1b_2'+b_1'b_2.
\end{aligned}
\end{equation*}
Summing up $I_1$ and $I_2$, we obtain 
\begin{align*}
    \phi(xy)&\leq n_1n_2 -(b-1)n_2 -(b'-1)n_1 + bb' + bn_2 + b'n_1 -2bb' + b_1b_2'+b_1'b_2\\
    & \le n_1n_2 + n_1 + n_2 -bb' + b_1b_2' + b_1'b_2 \\ 
    & \le n_1n_2 + n_1+n_2
\end{align*}


The number of induced $P_4$'s contained in $\mathcal{O}$ is at most $f(n_1)\leq \frac{1}{2}n_1^2+C n_1\log n_1$ by induction, and the number of induced $P_4$'s contained in $\mathcal{O}'$ is at most $f(n_2)\leq \frac{1}{2}n_2^2+C n_2\log n_2$ by induction.
Therefore, the total number of induced $P_4$'s in $G$ is at most 

\begin{equation*}    
\begin{aligned} 
    &f(n_1)+f(n_2)+\phi(xy)\\ 
    \leq &\frac{1}{2}n_1^2+\frac{1}{2}n_2^2+n_1n_2+C n_1\log n_1+C n_2\log n_2+ n_1 + n_2\\ 
    =& \frac{1}{2}n^2+C n\log n+J, 
\end{aligned}
\end{equation*}
where $$J=Cn_1\log n_1+Cn_2\log n_2-Cn \log n + 3n+4.$$
Then, it is sufficient to prove $J\leq 0$, which is equivalent to
\begin{equation*}
    3n+4 \le Cn_1\log (n/n_1)+Cn_2\log (n/n_2) + 2C\log n.
\end{equation*}
Since $n \ge 400$, $n_1+n_2=n+2$, and $n_i\geq \frac{n}{200}, i=1,2$, we have $n_i \le \frac{99}{100}n$, $i=1,2$.
Then $\log(n/n_i) \ge \log (100/99)$, and the right-hand side is at least $\log (100/99) \cdot C n$.
When $N_0$ and $C$ are large enough, the above inequality holds.
\hfill $\square$ \par

\section{Count induced \texorpdfstring{$P_{k+1}$}{P\_\{k+1\}}'s in outerplanar graphs}\label{sec: induce Pk outerplanar}
In this section, we give the proof of Theorem \ref{thm: Pk outerplanar}.

\noindent
\textbf{Lower bound construction.}

\begin{figure}
    \centering
    \includegraphics[width=0.7\textwidth]{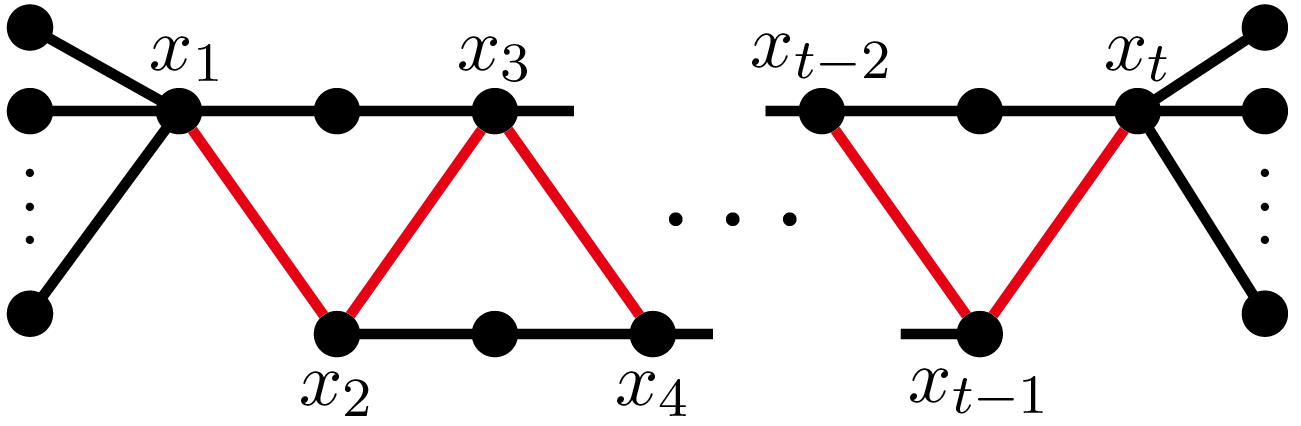}
    \caption{The graph $G_t'$. The red edges are from the path $P_t$.}\label{fig: lower bound construction}
\end{figure}

Let $P_t$ be a path on $t$ vertices $x_1,x_2,\ldots,x_t$, then define a graph $G_t$ by for each pair of vertices with distance two in $P_t$, connect them with a path of length two and the new vertex in the path is new for different pairs.
Let $G_t'$ be the graph obtained from $G_t$ by adding $\lfloor (n-2t+2)/2 \rfloor$ neighbors to $x_1$ and $x_t$ respectively~(see Figure~\ref{fig: lower bound construction}).
Then $G_{k-1}'$ is the lower bound construction for induced $P_{k+1}$ in outerplanar graphs.
Let us count the number of induced $P_{k+1}^{ind}$ in $G_{k-1}'$.

Let $h(t)$ be the number of induced $P_{t}^{ind}$ in $G_t$ starting at $x_1$.
When $t$ is small, we have $h(2) = 1$ and $h(3) = 2$.
Note that if the second vertex of the path is $x_2$, then the rest of the path corresponds to a $P_{t-1}^{ind}$ in $G_{t-1}$ starting at $x_2$.
If the second vertex of the path is not $x_3$, then it must be the new vertex connecting $x_1$ and $x_3$, and then the rest of the path corresponds to a $P_{t-2}^{ind}$ in $G_{t-2}$ starting at $x_3$.
Thus we have the recurrence relation
\begin{equation*}
    h(t) = h(t-1) + h(t-2).
\end{equation*}

Combining the initial values, we have that $h(t) = fib(t)$.
Then the number of induced $P_{k+1}^{ind}$ in $G_{k-1}'$ is at least $\frac{{(n-2k+3)}^2}{4}h(k-1) = \frac{{(n-2k+3)}^2}{4} fib(k-1)$.

\noindent
\textbf{Upper bound.}
Let $G$ be an outerplanar graph on $n$ vertices with maximum number of induced $P_{k+1}$'s.
As in the proof of Theorem~\ref{thm: P4 inducibility}, let $G'\supseteq G$ be the maximal outerplanar graph containing $G$ as a subgraph.
Then $G'$ admits a planar drawing with an outer cycle $\mathcal{C}$, and we draw the graph $G$ in the same way as $G'$ by removing the edges not in $G$.
Let $g(k)$ be the maximum number of induced $P_{k+1}$'s connecting two fixed vertices as end vertices of $G$.
Then clearly, we have that $p_{k+1}^{\ind}(G) \le g(k) \binom{n}{2}$.
It remains to estimate $g(k)$.
By outerplanarity and definition, we have $g(1) \le 1$ and $g(2) \le 2$.
We claim that $g(k) \le g(k-1) + g(k-2)$ for $k \ge 3$.

\begin{figure}
    \centering
    \begin{minipage}{0.4\textwidth}
    \centering
    \includegraphics[width=\textwidth]{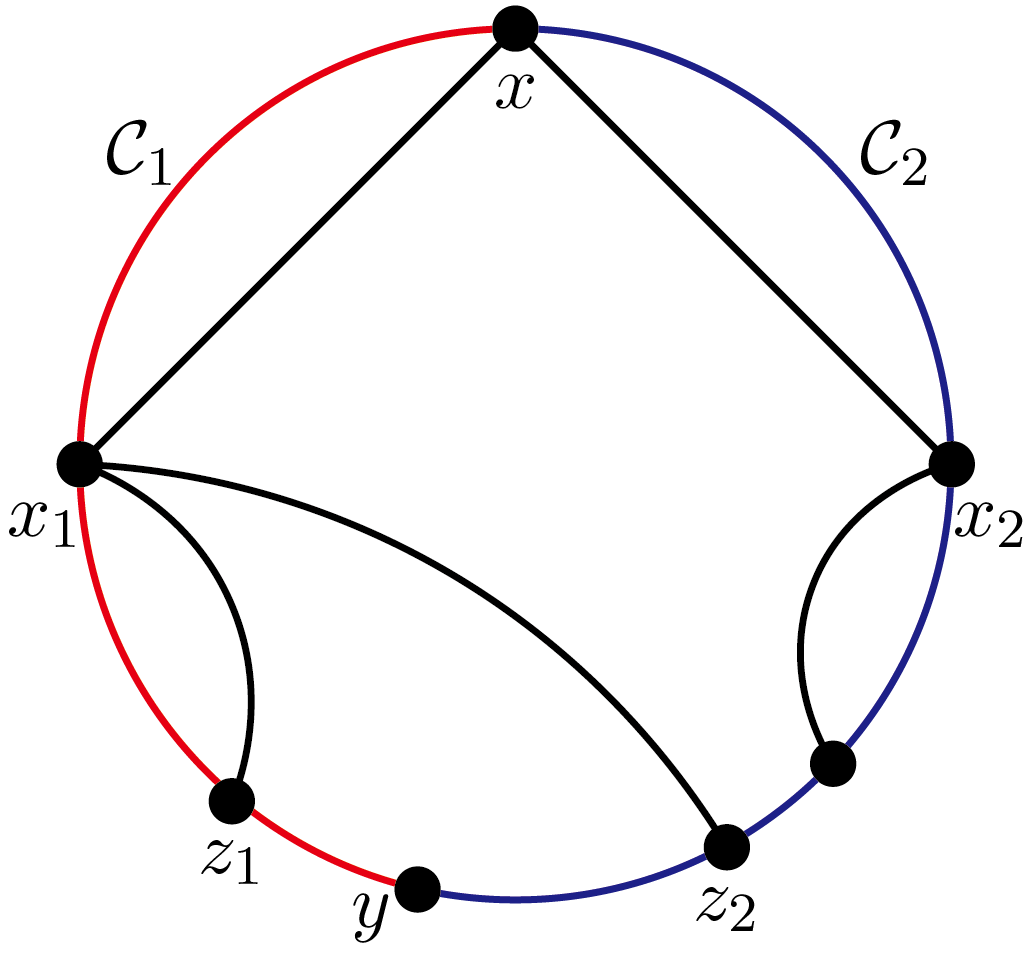}
    \caption{An illustration in the proof of the upper bound of Theorem~\ref{thm: Pk outerplanar}. 
    }\label{fig: upper bound fibonacci}
    \end{minipage}
\end{figure}

Let $x,y$ be two fixed vertices, and denote by $p^{\ind}_{k+1}(x,y)$ the number of induced $P_{k+1}$'s containing $x,y$ as end vertices.
It is sufficient to prove that $p^{\ind}_{k+1}(x,y) \le g(k-1) + g(k-2)$.
The vertices $x,y$ subdivide $\mathcal{C}$ into two arcs, denoted by $\mathcal{C}_1$ and $\mathcal{C}_2$~(see Figure~\ref{fig: upper bound fibonacci}).
Let $x_1$ be the closest neighbor of $x$ to $y$ in $\mathcal{C}_1$, that is, there is no neighbor of $x$ in $\mathcal{C}_1$ between $x_1$ and $y$.
Similarly, let $x_2$ be the closest neighbor of $x$ to $y$ in $\mathcal{C}_2$.
First, we consider the case when both $x_1$ and $x_2$ exist.
Let $P$ be an induced $P_{k+1}$ starting at $x$ and ending at $y$.
We claim that the neighbor of $x$ in $P$ is either $x_1$ or $x_2$.
Suppose otherwise, if the neighbor of $x$ in $P$, say $z$, is between $x$ and $x_1$ in $\mathcal{C}_1$, then there is a path from $z$ to $y$ without using $x_1$ since the path is induced.
It would contradict the outerplanarity.
Similarly, if $z$ is between $x$ and $x_2$ in $\mathcal{C}_2$, we can get a contradiction.
If $x_1$ or $x_2$ does not exist, then we have only one choice for the neighbor of $x$ in $P$, which implies that the number of induced $P_{k+1}$'s containing $x,y$ as end vertices is at most $p^{\ind}_{k}(x_1, y)$ or $p^{\ind}_{k}(x_2, y)$, which is at most $g(k-1)$ in both cases.
From now on, we may assume $x_1$ and $x_2$ both exist. Then $p^{\ind}_{k+1}(x,y) \le p^{\ind}_{k}(x_1, y) + p^{\ind}_{k}(x_2, y)$.

When the neighbor of $x$ in $P$ is $x_1$, then similarly, there are at most two choices for the neighbor of $x_1$ in $P$, say $z_1$ and $z_2$.
In this case, $z_1$ and $z_2$ are the closest neighbors of $x_1$ to $y$ in two arcs divided by $x_1$ and $y$ respectively.
Moreover, we may assume $z_1$ is on the arc with end points $x_1,y$ without $x$, and $z_2$ is on the arc with end points $x_2,y$ without $x$~(see Figure~\ref{fig: upper bound fibonacci}).
If either of $z_1,z_2$ does not exist, then we have that $p^{\ind}_{k}(x_1, y) \le g(k-2)$, and thus, $p^{\ind}_{k+1}(x,y) \le g(k-1) + g(k-2)$.
So we may assume $z_1$ and $z_2$ both exist. 
In this case, $x_2$ has no neighbor on the arc with end points $x_2$ and $y$ containing $x$.
Therefore, $p^{\ind}_{k-1}(x_2, y) \le g(k-2)$ and thus $p^{\ind}_{k+1}(x,y) \le g(k-1) + g(k-2)$.

Since $g(k)$ satisfies the Fibonacci recurrence relation with $g(1) \le 1$ and $g(2) \le 2$, we have that $g(k) \le fib(k+1)$.
\bibliography{ref.bib}
\bibliographystyle{wyc4}

\end{document}